\documentclass[fleqn,preprint,3p,a4paper]{elsarticle}

\usepackage[T1]{fontenc}
\usepackage[utf8]{inputenc}
\usepackage{amsmath,amssymb,amsthm,mathtools,mathrsfs}
\usepackage{enumerate}
\usepackage{microtype}
\usepackage[hidelinks]{hyperref}

\theoremstyle{plain}
\newtheorem{theorem}{Theorem}[section]
\newtheorem{lemma}[theorem]{Lemma}
\newtheorem{proposition}[theorem]{Proposition}
\newtheorem{corollary}[theorem]{Corollary}
\theoremstyle{remark}
\newtheorem{remark}[theorem]{Remark}

\newcommand{\KIrr}{\operatorname{K\text{-}Irr}}
\newcommand{\Irrc}{\operatorname{Irr}_{c}}
\newcommand{\Up}{\mathord{\uparrow}}
\newcommand{\calO}{\mathcal{O}}

\begin{document}

\begin{frontmatter}
\title{Products of co-sober spaces need not be co-sober\tnoteref{t1}}
\tnotetext[t1]{This research was supported by the National Natural Science Foundation of China (Nos. 12471070, 12071199, 12561083).}

\author[nit]{Xiaoquan Xu\corref{cor}}
\ead{xiqxu2002@163.com}
\cortext[cor]{Corresponding author}

\author[glut]{Wei Ji}
\ead{javeey@163.com}

\address[nit]{School of Computer Information Engineering, Nanchang Institute of Technology, Nanchang 330044, China}
\address[glut]{School of Mathematics and Statistics, Guilin University of Technology, Guilin 541004, China}

\begin{abstract}
We answer negatively the question whether the product of two co-sober spaces must be co-sober. For a nonempty closed subset $A$ of a $T_1$-space $(X,\tau)$, we consider the topology consisting of the co-finite open sets together with the $\tau$-open sets disjoint from $A$. We determine its $k$-irreducible compact subsets and derive a criterion that produces two compact co-sober spaces whose product contains a non-singleton $k$-irreducible compact saturated subset of the diagonal. Applying the criterion to a specific Kov\'a\v{r}--van Douwen space gives two countable compact Fr\'echet--Urysohn $T_1$ anti-Hausdorff co-sober spaces whose product is not co-sober. A further construction yields a countable compact Fr\'echet--Urysohn $T_1$ anti-Hausdorff co-sober space whose square is not co-sober. The failure of product closure also gives an alternative proof of the known non-reflectivity of co-sober spaces in the category of $T_0$-spaces.
\end{abstract}
\begin{keyword}
Co-sober space \sep product space \sep $k$-irreducible compact set \sep anti-Hausdorff space \sep Fr\'echet space
\MSC[2020] 54B10 \sep 54D10 \sep 54D30 \sep 06B35
\end{keyword}
\end{frontmatter}

\section{Introduction}

Co-sobriety was introduced by Escard\'o, Lawson and Simpson in connection with a dual form of the Hofmann--Mislove theorem \cite{ELS}. It asks, in a precise sense, whether the indecomposable compact saturated subsets of a $T_0$-space are generated by points: a $T_0$-space is co-sober if every $k$-irreducible compact saturated set is the saturation of a point. Although this definition is naturally paired with sobriety, the two notions are not formal duals; in particular, sober spaces need not be co-sober \cite{ShenWuXiZhao2020, WenXu}. Product stability is a basic structural question for co-sobriety. Xu and Zhao asked whether an arbitrary product of co-sober spaces must be co-sober \cite[Question~9.8]{XuZhao}. 

We prove that the binary product property fails. More precisely, there are two countable compact Fr\'echet--Urysohn $T_1$ anti-Hausdorff co-sober spaces whose product is not co-sober. The construction begins with a co-finite modification of a $T_1$ topology. Given a nonempty closed set $A\subseteq X$, we retain all original open sets disjoint from $A$ and all co-finite open sets. The resulting topology is compact and $T_1$, and its $k$-irreducible compact subsets can be described directly. Two disjoint closed sets meeting every non-singleton $k$-irreducible compact subset then give two co-sober modifications whose join is the original topology. The original obstruction reappears on the diagonal of their product.

The specialized input is the particular $\omega$-MAD realization of van Douwen's compact Fr\'echet--Urysohn anti-Hausdorff US-space considered by Kov\'a\v{r} \cite{vanDouwen,Kovar}. Kov\'a\v{r}'s description of its proper closed sets implies that the whole space is its only non-singleton $k$-irreducible compact subset. This conclusion is stronger than the usual list of properties of van Douwen's example and is the only specialized external input in the construction.

We then combine the two factors by adjoining a countable co-finite core. This produces a single countable compact Fr\'echet--Urysohn $T_1$ anti-Hausdorff co-sober space whose square is not co-sober; thus the failure of product stability already occurs for the square of one space. Since a full reflective subcategory of the category of $T_0$-spaces that is closed under homeomorphisms must be closed under products, this gives an alternative proof of the non-reflectivity of co-sober spaces, established by Shen, Xi and Zhao \cite[Corollary~4.3]{ShenXiZhao2024}. The categorical conclusion is not new; the point here is that it follows directly from failure of closure under squares. For comparison, we conclude with a positive result showing that the product of two co-sober spaces is co-sober whenever one factor is a KC-space.

Section~\ref{sec:prelim} recalls the terminology. Section~\ref{sec:modification} studies the co-finite modification. Section~\ref{sec:criterion} gives the product criterion. Section~\ref{sec:kv} verifies the Kov\'a\v{r}--van Douwen input and constructs the binary counterexample. Section~\ref{sec:square} constructs the square counterexample and derives the non-reflectivity consequence. Section~\ref{sec:kc} treats products with a KC factor.

\section{Preliminaries}\label{sec:prelim}

All compactness statements are made without a separation assumption. Let $X$ be a $T_0$-space. Its specialization order is denoted by $\leq_X$, and the saturation of a point $x$ is written $\Up x$. A subset is saturated if it is an upper set for the specialization order.

Let $Q(X)$ be the family of all nonempty compact saturated subsets of $X$. A member $K\in Q(X)$ is \emph{$k$-irreducible} if
\[
 K=K_1\cup K_2,\quad K_1,K_2\in Q(X),
\]
implies $K=K_1$ or $K=K_2$. We write $\KIrr(X)$ for the family of all such sets, and $\Irrc(X)$ for the family of all nonempty irreducible closed subsets of $X$. When the topology needs to be displayed explicitly, we write, for example, $Q(X,\sigma)$, $\KIrr(X,\sigma)$ and $\Irrc(X,\sigma)$. The space $X$ is \emph{co-sober} if every $K\in\KIrr(X)$ has the form $\Up x$ for a point $x\in X$ \cite{ELS}. The representing point is unique because $X$ is $T_0$.

In a $T_1$-space every subset is saturated and $\Up x=\{x\}$. Hence a $T_1$-space is co-sober precisely when its $k$-irreducible compact subsets are singletons.

A topological space $X$ is called a \emph{Fr\'echet--Urysohn space}  if for every $A\subseteq X$ and $x\in \overline{A}$, there exists a sequence $(x_n)_{n\in\mathbb N}$ in $A$ converging to $x$. We shall use the standard fact that every first-countable space is Fr\'echet (see, e.g., \cite[Theorem~1.6.14]{Engelking}).

As in \cite{vanDouwen, KunziZypen}, a topological space $X$ is called a \emph{KC}-\emph{space} if each compact set of $X$ is closed. The space $X$ is said to be a \emph{US}-\emph{space} provided that each convergent sequence has a unique limit. It is known that each Hausdorff space (=$T_2$-space) is a KC-space, each KC-space is a US-space, each US-space is a $T_1$-space and each first-countable \emph{US}-space is a $T_2$-space (cf. \cite{Wilansky-1967}).

\begin{lemma}\label{lem-Frechet-US-KC} \emph{(\cite[Lemma 1]{KunziZypen})}  Every Fr\'echet--Urysohn US-space is a KC-space.
\end{lemma}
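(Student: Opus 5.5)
To prove the lemma, let $X$ be a Fréchet--Urysohn US-space and $K\subseteq X$ compact; we show $K$ is closed, i.e. $\overline{K}\subseteq K$. Take $x\in\overline{K}$. By the Fréchet--Urysohn property there is a sequence $(x_n)_{n\in\mathbb N}$ in $K$ converging to $x$. The aim is to show that $(x_n)$ also converges to some point of $K$; uniqueness of sequential limits in a US-space then forces $x\in K$.

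First we dispose of the trivial case. If some point $y$ occurs as $x_n$ for infinitely many $n$, then a subsequence is constant at $y\in K$. This subsequence still converges to $x$. A constant sequence at $y$ converges to $y$, so by the US property $x=y\in K$.

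Otherwise, passing to a subsequence, we may assume the $x_n$ are pairwise distinct. Set $S=\{x_n:n\in\mathbb N\}\cup\{x\}$. The key step is to produce a point $z\in K$ with $x_n\to z$.

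Since $K$ is compact and $S_0=\{x_n\}$ is an infinite subset of $K$, $S_0$ has a complete accumulation point, or at least an accumulation point, $z\in K$. The difficulty is that an accumulation point need not be a sequential limit. Here the Fréchet--Urysohn property helps again. Suppose $z\neq x$. Then $z\in\overline{S_0\setminus\{z\}}$, so some sequence drawn from $S_0$ converges to $z$. Because the $x_n$ are distinct, this sequence has infinitely many distinct terms, and we can extract a subsequence $(x_{n_k})$ that is a subsequence of the original and converges to $z$. That subsequence also converges to $x$, so the US property gives $x=z\in K$.

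A cleaner alternative avoids choosing $z$ by contradiction. Suppose $x\notin K$. Each $y\in K$ has an open neighbourhood $U_y$ containing only finitely many $x_n$; if no such neighbourhood existed, $y$ would be an accumulation point of $S_0$, and the Fréchet argument above would give a subsequence converging to $y$, hence $y=x$, contradicting $x\notin K$. These $U_y$ cover $K$. Finitely many of them cover $K$, so they contain only finitely many $x_n$, contradicting $x_n\in K$ for all $n$.

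The only delicate point is justifying that "infinitely many $x_n$ in every neighbourhood of $y$" yields a subsequence converging to $y$. We handle it as follows. Since $U$ meets $S_0$ in infinitely many points for every neighbourhood $U$ of $y$, we have $y\in\overline{S_0\setminus F}$ for each finite $F$. Apply the Fréchet--Urysohn property to $A=\{x_n: n\ge m\}\setminus\{y\}$. Each sequence from $A$ converging to $y$ takes infinitely many values; otherwise a constant subsequence at some $x_j\ne y$ would converge to $y$, and the US property gives a contradiction. A diagonal choice of indices then yields a strictly increasing subsequence of $(x_n)$ converging to $y$, which finishes the proof.
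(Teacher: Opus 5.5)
Your proof is correct. The paper gives no proof of its own here; it imports the lemma from K\"unzi--van der Zypen, so there is no in-paper argument to compare against. Your route is the standard one: compactness of $K$ gives an accumulation point $z\in K$ of the injective sequence, Fr\'echet--Urysohn plus US upgrade $z$ to the limit of a genuine subsequence of $(x_n)$, and US then forces $z=x$.

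Two small points of hygiene:

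\begin{enumerate}[\rm (1)]
\item In your third paragraph, the claim that the sequence converging to $z$ has infinitely many distinct terms does not follow from the $x_n$ being distinct. It follows from US: a value $x_j\neq z$ repeated infinitely often would give a constant sequence converging to both $x_j$ and $z$. You do argue this correctly in your last paragraph.
\item Once each value is known to occur only finitely often, a single application of Fr\'echet--Urysohn to $S_0\setminus\{z\}$ already lets you pick terms with strictly increasing indices. So the passage to tails, the diagonal choice, the case split on $z\neq x$, and the second proof by contradiction are all redundant.
\end{enumerate}
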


A topological space $X$ is called \emph{anti}-\emph{Hausdorff} (see, e.g., \cite{Hoffmann-1979-1}) if each pair of nonempty open sets of $X$ intersects.

\begin{lemma}\label{lem:diagonal}
For topologies $\tau_1$ and $\tau_2$ on the same set $X$, let $\tau_1\vee\tau_2$ denote their join in the lattice of topologies on $X$. Then the diagonal $\Delta(X)=\{(x, x) : x\in X\}$ of $(X,\tau_1)\times(X,\tau_2)$, with its subspace topology, is homeomorphic to $(X,\tau_1\vee\tau_2)$.
\end{lemma}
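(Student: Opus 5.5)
The natural candidate for the homeomorphism is the diagonal map $\delta\colon X\to\Delta(X)$, $\delta(x)=(x,x)$. It is clearly a bijection onto $\Delta(X)$. It then suffices to show that, for $U\subseteq X$, the image $\delta(U)$ is open in the subspace $\Delta(X)$ exactly when $U\in\tau_1\vee\tau_2$.

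First I recall the standard description of the join. The family $\tau_1\cup\tau_2$ is a subbase for $\tau_1\vee\tau_2$. Since each $\tau_i$ is closed under finite intersections, the sets $U_1\cap U_2$ with $U_i\in\tau_i$ already form a base. Next I compute the basic open sets of the subspace. The open boxes $U_1\times U_2$ with $U_i\in\tau_i$ form a base of the product, so the sets $(U_1\times U_2)\cap\Delta(X)$ form a base of the subspace topology on $\Delta(X)$. The key identity is
\[
(U_1\times U_2)\cap\Delta(X)=\{(x,x): x\in U_1\cap U_2\}=\delta(U_1\cap U_2).
\]
Hence $\delta$ maps the base $\{U_1\cap U_2: U_i\in\tau_i\}$ of $\tau_1\vee\tau_2$ bijectively onto a base of $\Delta(X)$.

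Because $\delta$ is a bijection, it commutes with arbitrary unions. So unions of basic sets correspond exactly, which shows that $\delta$ and $\delta^{-1}$ are both open maps, and therefore $\delta$ is a homeomorphism.

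The only point needing care is the description of the join's base, namely that finite intersections of subbasic sets reduce to the form $U_1\cap U_2$. This follows by grouping the $\tau_1$-sets and the $\tau_2$-sets separately and using that $X\in\tau_i$. No real obstacle is expected.
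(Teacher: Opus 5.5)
Your proposal is correct and follows essentially the same route as the paper. Both use the diagonal map $x\mapsto(x,x)$, the bases $\{U_1\cap U_2\}$ of $\tau_1\vee\tau_2$ and $\{U_1\times U_2\}$ of the product, and the identity $(U_1\times U_2)\cap\Delta(X)=\delta(U_1\cap U_2)$; you also justify why the sets $U_1\cap U_2$ form a base of the join, which the paper takes for granted.
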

\begin{proof} Let $\tau$ be the product topology of $\tau_1$ and $\tau_2$. Then $\{U_1\times U_2 : U_1\in \tau_1, U_2\in\tau_2\}$ is a base of $\tau$ and $\{U_1\cap U_2 : U_1\in \tau_1, U_2\in\tau_2\}$ is a base of $\tau_1\vee\tau_2$.

Define
\[
 \varphi:(X,\tau_1\vee\tau_2)\longrightarrow
 (\Delta(X),\tau|_{\Delta(X)}),\quad x\longmapsto(x,x).
\]
Clearly, $\varphi$ is a bijection. For any $W_1\in \tau_1, W_2\in\tau_2$, we have $\varphi^{-1}((W_1\times W_2)\cap \Delta(X))=W_1\cap W_2$ or, equivalently, $\varphi(W_1\cap W_2)=(W_1\times W_2)\cap \Delta(X)$. Hence $\varphi$ is a homeomorphism.
\end{proof}

\section{A co-finite modification}\label{sec:modification}

Let $(X,\tau)$ be a $T_1$-space, and let $A\subseteq X$ be nonempty and closed. Write $\tau_{\mathrm{cof}}$ for the co-finite topology on $X$ and define
\[
 \tau_A=\tau_{\mathrm{cof}}\cup\{U\in\tau:U\cap A=\varnothing\}.
 \tag{3.1}
\]
Thus a $\tau_A$-open set meeting $A$ is co-finite, while every $\tau$-open set disjoint from $A$ is retained.

\begin{lemma}\label{lem:basic}
The family $\tau_A$ is a compact $T_1$ topology contained in $\tau$. It has the following additional properties.
\begin{enumerate}[\rm (1)]
\item Every subset of $X$ meeting $A$ is compact in $(X,\tau_A)$.
\item The restrictions of $\tau_A$ and $\tau$ to $X\setminus A$ coincide.
\item A subset $K\subseteq X$ is compact in $(X,\tau_A)$ if and only if either $K\cap A\neq\varnothing$, or $K\cap A=\varnothing$ and $K$ is compact in $(X,\tau)$.
\end{enumerate}
\end{lemma}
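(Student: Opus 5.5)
First I check that $\tau_A$ is a topology contained in $\tau$. Since $(X,\tau)$ is $T_1$, every finite set is $\tau$-closed, so $\tau_{\mathrm{cof}}\subseteq\tau$; hence $\tau_A\subseteq\tau$. Clearly $\varnothing,X\in\tau_A$. For a finite intersection, note that the intersection of two co-finite sets is co-finite. The intersection of two $\tau$-open sets disjoint from $A$ is again of that kind. A mixed intersection $U\cap V$, with $U$ co-finite and $V\in\tau$ disjoint from $A$, lies in $\tau$ and misses $A$, so it belongs to $\tau_A$.

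For an arbitrary union $\bigcup_i U_i$ of members of $\tau_A$, the union is $\tau$-open because $\tau_A\subseteq\tau$. If it misses $A$, it belongs to the second family. Otherwise some $U_i$ meets $A$. Every member of $\tau_A$ meeting $A$ is co-finite, because members of the second family miss $A$. So that $U_i$ is co-finite, and therefore the union is co-finite. The $T_1$ property follows from $\tau_{\mathrm{cof}}\subseteq\tau_A$.

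Next I prove (1), which also gives compactness of $(X,\tau_A)$ since $A\neq\varnothing$. Let $K$ meet $A$ at a point $a$, and take an open cover of $K$ by members of $\tau_A$. Some member $U$ contains $a$. Then $U$ meets $A$, so it is co-finite, and $K\setminus U$ is finite. Finitely many further members of the cover then cover $K\setminus U$.

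For (2), one inclusion follows from $\tau_A\subseteq\tau$. For the other, take $V\in\tau$. Since $A$ is $\tau$-closed, $V\setminus A=V\cap(X\setminus A)$ is $\tau$-open and misses $A$. So $V\setminus A\in\tau_A$, and it has the same trace as $V$ on $X\setminus A$.

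For (3), the case $K\cap A\neq\varnothing$ is (1). If $K\cap A=\varnothing$, then $K\subseteq X\setminus A$. Compactness of $K$ depends only on the subspace topology on $K$, which is inherited from the subspace topology on $X\setminus A$. By (2), $K$ is $\tau_A$-compact if and only if it is $\tau$-compact. The argument is routine throughout; the only point needing care is the union step in the first paragraph, where one must observe that any $\tau_A$-open set meeting $A$ is forced to be co-finite.
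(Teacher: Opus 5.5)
Your proof is correct and matches the paper's argument essentially step for step. You get $\tau_A\subseteq\tau$ from the $T_1$ property, use the key observation that any $\tau_A$-open set meeting $A$ is co-finite for both unions and (1), use $V\setminus A\in\tau_A$ for (2), and combine these to get (3); the only cosmetic difference is that you check finite intersections by cases on which family each factor belongs to, while the paper splits on whether the intersection meets $A$.
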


\begin{proof}
Since $(X,\tau)$ is $T_1$, every co-finite subset of $X$ is $\tau$-open, so $\tau_A\subseteq\tau$. If a union of members of $\tau_A$ meets $A$, one of its members meets $A$ and is co-finite; hence the union is co-finite. If a finite intersection meets $A$, every factor containing the common point is co-finite, so the intersection is co-finite. If the intersection misses $A$, it is a $\tau$-open set disjoint from $A$. Thus $\tau_A$ is a topology. It contains the co-finite topology and is therefore $T_1$.

Let $K\cap A\neq\varnothing$, and let $\mathcal U$ be a $\tau_A$-open cover of $K$. Choose $a\in K\cap A$ and $U\in\mathcal U$ with $a\in U$. The set $U$ is co-finite, so finitely many further members of $\mathcal U$ cover the remaining points of $K$. This proves (1), and the case $K=X$ proves compactness of $(X,\tau_A)$.

For (2), if $U\in\tau$, then $U\setminus A$ is $\tau$-open because $A$ is closed, and it belongs to $\tau_A$. Since
\[
 U\cap(X\setminus A)=(U\setminus A)\cap(X\setminus A),
\]
the restrictions coincide. Assertion (3) follows from (1) and (2).
\end{proof}

\begin{theorem}\label{thm:classification}
Let $(X,\tau)$ be $T_1$, and let $A\subseteq X$ be nonempty and closed. Then
\[
 \KIrr(X,\tau_A)
 =\{\{a\}:a\in A\}
 \cup\{K\in\KIrr(X,\tau):K\cap A=\varnothing\}.
 \tag{3.2}
\]
Consequently, $(X,\tau_A)$ is co-sober if and only if $A$ meets every non-singleton member of $\KIrr(X,\tau)$.
\end{theorem}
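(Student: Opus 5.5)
The proof rests on two inclusions plus the consequence, with Lemma~\ref{lem:basic} as the main tool. Both spaces are $T_1$, so every subset is saturated, and $Q$ is just the family of nonempty compact sets. By Lemma~\ref{lem:basic}(3), $Q(X,\tau_A)$ consists of all nonempty sets meeting $A$, together with all nonempty $\tau$-compact sets disjoint from $A$.

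\emph{Inclusion $\subseteq$.} Let $K\in\KIrr(X,\tau_A)$ and split by whether $K$ meets $A$.

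If $K\cap A\neq\varnothing$, pick $a\in K\cap A$ and write $K=\{a\}\cup(K\setminus\{a\})$ when $K$ has at least two points. Each piece must belong to $Q(X,\tau_A)$. This is clear for $\{a\}$. It is also clear for $K\setminus\{a\}$ if that set still meets $A$. If it does not meet $A$, I would instead argue that $K\setminus\{a\}$ is $\tau_A$-compact directly: any $\tau_A$-open cover of $K\setminus\{a\}$, with $a$ added via a cofinite open set, gives a cover of $K$. A finite subcover of $K$ then covers $K\setminus\{a\}$, since the cofinite set misses only finitely many points, each covered by some further member. The cleanest way to phrase this is that removing one point from a compact set in a $T_1$ space need not preserve compactness in general. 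Here, however, $K\setminus\{a\}$ is a subset of $X$, and any $\tau_A$-open cover $\mathcal U$ of it, together with the cofinite open set $X\setminus F$ for a suitable finite $F$, can be handled. This step is where care is needed.

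A simpler route is to take two distinct points $a\in K\cap A$ and $b\in K$, and write $K=(K\setminus\{b\})\cup(K\setminus\{a\})$ if $K\cap A\ni$ a second point. Alternatively, use $K=\{a\}\cup(K\setminus\{a\})$ together with the observation that $K\setminus\{a\}$ is $\tau$-closed in $K$ relative to $K\setminus A$. I expect this to be the main obstacle, and I would settle it as follows. If $K\setminus\{a\}$ meets $A$, it is compact by Lemma~\ref{lem:basic}(1). Otherwise $K\cap A=\{a\}$. Then take any $b\in K\setminus\{a\}$ and write $K=\{b\}\cup(K\setminus\{b\})$, where $K\setminus\{b\}\ni a$ is compact by Lemma~\ref{lem:basic}(1). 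Either way $K$ splits into two proper compact pieces, contradicting $k$-irreducibility. Hence $K=\{a\}$.

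If $K\cap A=\varnothing$, then $K$ is $\tau$-compact by Lemma~\ref{lem:basic}(3). Any decomposition $K=K_1\cup K_2$ in $Q(X,\tau)$ has $K_i\subseteq K$ disjoint from $A$, so $K_i\in Q(X,\tau_A)$ by Lemma~\ref{lem:basic}(3). Irreducibility in $\tau_A$ then gives $K=K_i$, so $K\in\KIrr(X,\tau)$.

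\emph{Inclusion $\supseteq$.} Singletons are always $k$-irreducible. Now let $K\in\KIrr(X,\tau)$ with $K\cap A=\varnothing$. Then $K\in Q(X,\tau_A)$. Given $K=K_1\cup K_2$ with $K_i\in Q(X,\tau_A)$, the $K_i$ miss $A$, so by Lemma~\ref{lem:basic}(3) they are $\tau$-compact, and $\tau$-irreducibility yields $K=K_i$.

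\emph{Consequence.} By the remark in Section~\ref{sec:prelim}, the $T_1$ space $(X,\tau_A)$ is co-sober iff all members of (3.2) are singletons. That holds iff no non-singleton $K\in\KIrr(X,\tau)$ is disjoint from $A$, that is, iff $A$ meets every non-singleton member of $\KIrr(X,\tau)$.
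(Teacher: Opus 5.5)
Your final settled argument is correct and is essentially the paper's proof. For $K$ meeting $A$ with $|K|\ge 2$, the paper likewise writes $K=\{b\}\cup(K\setminus\{b\})$ with $b\neq a$ and uses Lemma~\ref{lem:basic}(1). For $K$ disjoint from $A$, it uses that $\tau_A$ and $\tau$ agree on $X\setminus A$, which is what your appeal to Lemma~\ref{lem:basic}(3) amounts to.

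You should delete the exploratory detour. The proposed direct cover argument that $K\setminus\{a\}$ is $\tau_A$-compact is invalid: points covered only by the added cofinite set need not be finitely coverable by $\mathcal U$, and the claim fails whenever $K\setminus\{a\}$ misses $A$ and is not $\tau$-compact. The case split is also unnecessary, because removing any $b\neq a$ works in every case.
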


\begin{proof}
Let $K\in\KIrr(X,\tau_A)$. Suppose first that $K\cap A\neq\varnothing$ and $|K|>1$. Choose $a\in K\cap A$ and $x\in K\setminus\{a\}$. By Lemma~\ref{lem:basic}, $K\setminus\{x\}$ is compact because it still meets $A$, while $\{x\}$ is compact. Both sets are saturated because $(X,\tau_A)$ is $T_1$. Thus
\[
 K=\{x\}\cup(K\setminus\{x\})
\]
is a union of two proper members of $Q(X,\tau_A)$, a contradiction. Hence every member of $\KIrr(X,\tau_A)$ meeting $A$ is a singleton contained in $A$.

Now suppose that $K\cap A=\varnothing$. By Lemma~\ref{lem:basic}, the subspace topology on $K$, and on every subset of $K$, is the same for $\tau_A$ and $\tau$. Since both spaces are $T_1$, saturation is automatic. Therefore $K$ is compact and $k$-irreducible for $\tau_A$ if and only if it is compact and $k$-irreducible for $\tau$. This proves (3.2), and the final assertion follows from the $T_1$ characterization of co-sobriety.
\end{proof}

\begin{lemma}\label{lem:preservation}
If $(X,\tau)$ is Fr\'echet--Urysohn, then $(X,\tau_A)$ is Fr\'echet--Urysohn. If $(X,\tau)$ is anti-Hausdorff, then $(X,\tau_A)$ is anti-Hausdorff.
\end{lemma}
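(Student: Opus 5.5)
The anti-Hausdorff part needs only the inclusion $\tau_A\subseteq\tau$ from Lemma~\ref{lem:basic}. For the Fr\'echet--Urysohn part, I fix $B\subseteq X$ and a point $x$ in the $\tau_A$-closure of $B$, and split into the cases $x\in A$ and $x\notin A$. In each case I describe the $\tau_A$-neighbourhoods of $x$ explicitly using the definition (3.1).

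\emph{Anti-Hausdorff.} Let $U,V$ be nonempty $\tau_A$-open sets. By Lemma~\ref{lem:basic}, $\tau_A\subseteq\tau$, so $U$ and $V$ are nonempty $\tau$-open sets. Since $(X,\tau)$ is anti-Hausdorff, $U\cap V\neq\varnothing$.

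\emph{Case $x\in A$.} Every $\tau_A$-open set containing $x$ meets $A$, so it is co-finite. If $x\in B$, the constant sequence at $x$ converges to $x$. Otherwise $B$ must be infinite. Indeed, if $B$ were finite, then $X\setminus B$ would be a co-finite, hence $\tau_A$-open, neighbourhood of $x$ missing $B$, contradicting $x\in\overline{B}$. So choose an injective sequence $(x_n)$ in $B$. A co-finite set contains all but finitely many terms of an injective sequence, so $(x_n)$ converges to $x$ in $\tau_A$.

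\emph{Case $x\notin A$.} The set $X\setminus A$ is $\tau$-open and disjoint from $A$, so it belongs to $\tau_A$. Hence the $\tau_A$-neighbourhoods of $x$ contain the $\tau$-open sets $W\ni x$ with $W\subseteq X\setminus A$. Every $\tau$-neighbourhood $U$ of $x$ contains such a set, namely $U\setminus A$.

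Since $x$ lies in the $\tau_A$-closure of $B$, each such $W$ meets $B$, and in fact meets $B\setminus A$. Therefore $x$ lies in the $\tau$-closure of $B\setminus A$. The Fr\'echet--Urysohn property of $\tau$ gives a sequence in $B\setminus A\subseteq B$ converging to $x$ in $\tau$. Because $\tau_A\subseteq\tau$, the same sequence converges to $x$ in $\tau_A$.

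The only step needing care is the case $x\notin A$. There one must pass from $\tau_A$-closure to $\tau$-closure of $B\setminus A$, which uses that $A$ is $\tau$-closed. The rest is routine.
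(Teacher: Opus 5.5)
Your proof is correct and follows the paper's argument essentially step for step. Anti-Hausdorffness comes from $\tau_A\subseteq\tau$, and for the Fr\'echet--Urysohn property you use the same split into $x\in A$ (constant sequence, or an injective sequence in the necessarily infinite set, since every $\tau_A$-neighbourhood of $x$ is co-finite) and $x\notin A$ (using $U\setminus A\in\tau_A$ to get $x\in\overline{B\setminus A}^{\tau}$, then passing the $\tau$-convergent sequence to the coarser topology $\tau_A$).
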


\begin{proof}
The anti-Hausdorff assertion follows from $\tau_A\subseteq\tau$.

Let $x\in\overline C^{\tau_A}$. If $x\in A$, then either $x\in C$, in which case the constant sequence suffices, or $C$ is infinite; otherwise the co-finite neighborhood $X\setminus C$ of $x$ would miss $C$. In the latter case, choose pairwise distinct points $c_n\in C$. Every $\tau_A$-neighborhood of $x$ is co-finite, so $c_n\to x$.

If $x\notin A$, then for every $\tau$-neighborhood $U$ of $x$, the set $U\setminus A$ is a $\tau_A$-neighborhood of $x$ and therefore meets $C$. Hence
\[
 x\in\overline{C\setminus A}^{\tau}.
\]
The Fr\'echet--Urysohn property of $(X,\tau)$ gives a sequence in $C\setminus A$ converging to $x$ in $\tau$. It also converges in the coarser topology $\tau_A$.
\end{proof}

\section{A product criterion}\label{sec:criterion}

\begin{theorem}\label{thm:criterion}
Let $(X,\tau)$ be a $T_1$-space with a non-singleton $k$-irreducible compact subset. Suppose that there are disjoint nonempty closed sets $A,B\subseteq X$ such that every non-singleton member of $\KIrr(X,\tau)$ meets both $A$ and $B$. Then $(X,\tau_A)$ and $(X,\tau_B)$ are compact $T_1$ co-sober spaces, whereas their product is not co-sober.
\end{theorem}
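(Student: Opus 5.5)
The first half of the statement follows directly from Section~\ref{sec:modification}. By Lemma~\ref{lem:basic}, $(X,\tau_A)$ and $(X,\tau_B)$ are compact $T_1$ topologies. Every non-singleton member of $\KIrr(X,\tau)$ meets $A$. Theorem~\ref{thm:classification} then shows that $(X,\tau_A)$ is co-sober, and the symmetric argument with $B$ shows that $(X,\tau_B)$ is co-sober.

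For the product, I will first compute the join $\tau_A\vee\tau_B$ and show it equals $\tau$. Since $\tau_A,\tau_B\subseteq\tau$, the join is contained in $\tau$. For the reverse inclusion, let $U\in\tau$.
\begin{itemize}
\item[(a)] The set $U\setminus A$ is $\tau$-open and misses $A$, so it lies in $\tau_A$.
\item[(b)] Likewise $U\setminus B\in\tau_B$.
\item[(c)] Since $A\cap B=\varnothing$, every point of $U$ lies outside $A$ or outside $B$. Hence $U=(U\setminus A)\cup(U\setminus B)$, which is a union of a $\tau_A$-open set and a $\tau_B$-open set.
\end{itemize}
So $U\in\tau_A\vee\tau_B$. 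By Lemma~\ref{lem:diagonal}, the diagonal $\Delta(X)$ of $P=(X,\tau_A)\times(X,\tau_B)$ is homeomorphic to $(X,\tau)$ via $\varphi(x)=(x,x)$.

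Next I fix a non-singleton $K\in\KIrr(X,\tau)$ and set $L=\varphi(K)\subseteq\Delta(X)$. The set $L$ is compact in $\Delta(X)$, hence compact in $P$. The product $P$ is $T_1$ as a product of $T_1$ spaces, so $L$ is saturated and $L\in Q(P)$. Also $L$ is not a singleton, so it cannot equal $\Up p=\{p\}$ for any point $p$. It remains to show $L\in\KIrr(P)$.

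This last step is the main point. Suppose $L=L_1\cup L_2$ with $L_1,L_2\in Q(P)$. Both $L_i$ are subsets of $L\subseteq\Delta(X)$. A subset of a subspace is compact in the subspace exactly when it is compact in the ambient space, so each $L_i$ is compact in $\Delta(X)$. Then $\varphi^{-1}(L_i)$ is compact in $(X,\tau)$, and it is saturated there because $(X,\tau)$ is $T_1$. Also $K=\varphi^{-1}(L_1)\cup\varphi^{-1}(L_2)$. Since $K$ is $k$-irreducible, $K=\varphi^{-1}(L_i)$ for some $i$, and therefore $L=L_i$.

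Thus $L$ is a non-singleton member of $\KIrr(P)$, and $P$ is not co-sober. The only delicate points are the identification of the join with $\tau$, which is where disjointness of $A$ and $B$ is used, and the transfer of compactness between $\Delta(X)$ and $P$.
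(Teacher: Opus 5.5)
Your proposal is correct and follows the paper's proof essentially step for step. You get co-sobriety of both modifications from Lemma~\ref{lem:basic} and Theorem~\ref{thm:classification}, establish $\tau_A\vee\tau_B=\tau$ via $U=(U\setminus A)\cup(U\setminus B)$, and transfer $k$-irreducibility of a non-singleton $K\in\KIrr(X,\tau)$ to its diagonal copy using Lemma~\ref{lem:diagonal} and the intrinsic nature of compactness for subspaces.
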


\begin{proof}
Lemma~\ref{lem:basic} and Theorem~\ref{thm:classification} show that both modified spaces are compact, $T_1$ and co-sober.

We claim that
\[
 \tau_A\vee\tau_B=\tau.
 \tag{4.1}
\]
The inclusion from left to right follows from $\tau_A,\tau_B\subseteq\tau$. Conversely, for every $U\in\tau$,
\[
 U=(U\setminus A)\cup(U\setminus B),
\]
because $A\cap B=\varnothing$. Since $A$ and $B$ are closed, $U\setminus A\in\tau_A$ and $U\setminus B\in\tau_B$. This proves (4.1).

Choose a non-singleton $C\in\KIrr(X,\tau)$. By Lemma \ref{lem:diagonal},
\[
 \Delta(C)=\{(x,x):x\in C\}
\]
is homeomorphic to $C$, and hence compact. Suppose that $\Delta(C)=K_1\cup K_2$, where $K_1$ and $K_2$ are compact saturated subsets of the product. Since $K_i\subseteq \Delta(C)$, their inverse images under the diagonal homeomorphism are compact subsets of $(X,\tau)$. They are saturated because $(X,\tau)$ is $T_1$. The k-irreducibility of $C$ therefore gives $K_1=\Delta(C)$ or $K_2=\Delta(C)$. Thus $\Delta(C)$ is $k$-irreducible in the product. The product is $T_1$, so $\Delta(C)$ is saturated, and its having more than one point prevents it from being the saturation of a point. Hence the product is not co-sober.
\end{proof}

\section{The Kov\'a\v{r}--van Douwen example}\label{sec:kv}

The next lemma extracts the property of the example that is needed here. It concerns the particular $\omega$-MAD realization described by Kov\'a\v{r}, not every space having only the standard properties of van Douwen's example.

\begin{lemma}\label{lem:kv}
There exists a countably infinite compact Fr\'echet--Urysohn anti-Hausdorff KC-space $(M,\tau)$ such that
\[
 \KIrr(M,\tau)=\{\{x\}:x\in M\}\cup\{M\}.
 \tag{5.1}
\]
\end{lemma}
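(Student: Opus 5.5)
The plan is to take for $(M,\tau)$ the $\omega$-MAD realization of van Douwen's space described by Kov\'a\v{r} \cite{Kovar,vanDouwen}. From those sources I would take the standard properties: countably infinite, compact, Fr\'echet--Urysohn, anti-Hausdorff and US. Lemma~\ref{lem-Frechet-US-KC} then upgrades US to KC, since the space is Fr\'echet--Urysohn. So all the listed properties are either quoted or follow from Lemma~\ref{lem-Frechet-US-KC}. The real work is identity (5.1), which I would split into two inclusions.

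First, $M\in\KIrr(M,\tau)$. The space $M$ is compact and, being $T_1$, saturated. Suppose $M=K_1\cup K_2$ with $K_1,K_2$ compact. By the KC property both $K_i$ are closed, so $M\setminus K_1$ and $M\setminus K_2$ are open. They are disjoint because $K_1\cup K_2=M$. Anti-Hausdorffness forces one of them to be empty, so $K_1=M$ or $K_2=M$. Singletons are trivially $k$-irreducible. This gives the inclusion $\supseteq$.

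For $\subseteq$, let $K\in\KIrr(M,\tau)$ with $K\neq M$ and $|K|\ge 2$; I will derive a contradiction. By KC, $K$ is a proper closed subset of $M$. Here I would invoke Kov\'a\v{r}'s description of the proper closed sets of this realization. I expect it to show that every proper closed subset is Hausdorff in the subspace topology; concretely, it should meet only finitely many of the MAD ``branches'' together with their limit points, and so be a countable compact metrizable (scattered) space. Granting that $K$ is Hausdorff, pick distinct $x,y\in K$ and disjoint relatively open sets $U\ni x$, $V\ni y$ in $K$. Then
\[
 K=(K\setminus U)\cup(K\setminus V),
\]
because $U\cap V=\varnothing$. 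Both pieces are relatively closed in the compact set $K$, hence compact, and saturated since $M$ is $T_1$. They are proper subsets of $K$, as $x\notin K\setminus U$ and $y\notin K\setminus V$. This contradicts $k$-irreducibility, so every member of $\KIrr(M,\tau)$ other than $M$ is a singleton.

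The main obstacle is the Hausdorffness of proper closed subsets. It is not part of the usual list of properties of van Douwen's space and must be read off carefully from Kov\'a\v{r}'s explicit description of the closed sets in the $\omega$-MAD model. This is why the lemma is stated for that particular realization. If Kov\'a\v{r}'s description gives something weaker, I would fall back on showing that every proper closed set is first countable. A first-countable US-space is Hausdorff, and a subspace of a US-space is again US, so the splitting argument above still applies.
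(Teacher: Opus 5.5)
\textbf{There is a genuine gap.} Your inclusion $\supseteq$ is fine: KC makes $K_1,K_2$ closed, and anti-Hausdorffness rules out two disjoint nonempty open complements. Your splitting argument is also correct once a proper closed set is known to contain two points with disjoint relative neighbourhoods. The gap is that the only nontrivial step of (5.1) rests on a property you merely expect. That step is showing that no infinite proper closed set is $k$-irreducible. The property you rely on is that every proper closed subspace of Kov\'a\v{r}'s realization is Hausdorff, or first countable in your fallback. You do not derive this from Kov\'a\v{r}'s description of $\mathcal C$, and it is not the property his proof of Example~2.2 records. As Remark~\ref{rem:scope} stresses, (5.1) cannot be obtained from the standard list of properties of van Douwen's space. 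Without an explicit argument at this point the lemma is not proved.

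The property you conjecture is also much stronger than what you need. Since $M$ is compact and KC, its compact sets are exactly its closed sets. So it suffices that every infinite proper closed $H$ is a union of two proper closed subsets.

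This is exactly what Kov\'a\v{r}'s proof of Example~2.2 supplies: for every infinite $H\in\mathcal C$ there is an infinite $D\subseteq H$ with $H\setminus\{x\}\in\mathcal C$ for all $x\in D$. Take distinct $p,q\in D$. Then $H=(H\setminus\{p\})\cup(H\setminus\{q\})$, and both pieces are proper, closed, and hence compact. Finite closed sets with at least two points split because $M$ is $T_1$.

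This is the input the paper uses: it computes $\Irrc(M,\tau)$ this way and then transfers the result to $\KIrr(M,\tau)$ via KC and compactness. To close the gap, either replace your conjectural Hausdorffness claim with this fact, or actually prove Hausdorffness from the explicit form of $\mathcal C$ in the $\omega$-MAD model.
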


\begin{proof}
Kov\'a\v{r} \cite[p.~2]{Kovar} considers a structure $(M,\mathcal T)$ in which $\mathcal T$ is closed under finite intersections and nonempty unions but need not contain $\varnothing$. He associates with it the ordinary topology
\[
 \tau=\mathcal T\cup\{\varnothing\}.
\]
Let
\[
 \mathcal C=\{H\subseteq M:M\setminus H\in\mathcal T\}.
\]
The closed sets of $(M,\tau)$ are precisely the members of $\mathcal C$ together with $M$.

In \cite[Example~2.2]{Kovar}, Kov\'a\v{r} constructs $\mathcal C$ from a family $\mathcal A$ and a map $l$ satisfying $l(A)\notin A$. In the proof, for every infinite $H\in\mathcal C$ an infinite subset $D\subseteq H$ is obtained such that $H\setminus\{x\}\in\mathcal C$ for every $x\in D$. Choosing distinct $p,q\in D$, both $H\setminus\{p\}$ and $H\setminus\{q\}$ are proper closed subsets and
\[
 H=(H\setminus\{p\})\cup(H\setminus\{q\}).
\]
Hence every infinite member of $\mathcal C$ is reducible. In the paragraph following the example, he takes $M=\mathbb N$ and an $\omega$-MAD family and records that the associated topology is anti-Hausdorff and Fr\'echet--Urysohn, that convergent sequences have unique limits, and that the topology is maximal compact \cite[Example~2.2 and the following paragraph, p.~6]{Kovar}; see also \cite{vanDouwen}.

Thus $(M,\tau)$ is countably infinite, compact, Fr\'echet--Urysohn, anti-Hausdorff and US, and therefore $T_1$. Every infinite proper closed set belongs to $\mathcal C$ and is reducible. Every finite closed set with at least two points is reducible because the space is $T_1$. The whole space $M$ is irreducible because it is anti-Hausdorff. Consequently,
\[
 \Irrc(M,\tau)=\{\{x\}:x\in M\}\cup\{M\}.
 \tag{5.2}
\]

By Lemma \ref{lem-Frechet-US-KC}, $(M,\tau)$ is a KC-space. Since it is compact, its compact subsets are exactly its closed subsets. For a compact subset of $M$, k-irreducibility is therefore equivalent to irreducibility as a closed set. Saturation is automatic because $M$ is $T_1$; compact pieces are closed by the KC property, and closed pieces are compact because $M$ is compact. Combining this observation with (5.2) gives (5.1).
\end{proof}

\begin{remark}\label{rem:scope}
Equation~(5.1) does not follow merely from compactness, the Fr\'echet--Urysohn and US properties, and anti-Hausdorffness. It uses Kov\'a\v{r}'s description of the proper closed sets in the chosen $\omega$-MAD realization.
\end{remark}

\begin{theorem}\label{thm:binary}
There exist two countable compact Fr\'echet--Urysohn $T_1$ anti-Hausdorff co-sober spaces whose product is not co-sober.
\end{theorem}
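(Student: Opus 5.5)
The plan is to apply Theorem~\ref{thm:criterion} to the space $(M,\tau)$ of Lemma~\ref{lem:kv}, and then use Lemmas~\ref{lem:basic} and~\ref{lem:preservation} to transfer the remaining properties to the two modified topologies.

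\textbf{Choosing $A$ and $B$.} By (5.1), the only non-singleton member of $\KIrr(M,\tau)$ is $M$ itself. Hence the hypothesis of Theorem~\ref{thm:criterion} reduces to the following: there is a non-singleton member (namely $M$, since $M$ is infinite), and there are disjoint nonempty closed sets $A,B\subseteq M$, each of which automatically meets $M$. Since $(M,\tau)$ is $T_1$ by Lemma~\ref{lem:kv}, I take two distinct points $p\neq q$ in $M$ and set $A=\{p\}$ and $B=\{q\}$. These are disjoint, nonempty and closed. Theorem~\ref{thm:criterion} then states that $(M,\tau_A)$ and $(M,\tau_B)$ are compact $T_1$ co-sober spaces whose product is not co-sober. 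Concretely, the diagonal copy $\Delta(M)$ of $(M,\tau_A\vee\tau_B)=(M,\tau)$ is a non-singleton $k$-irreducible compact subset of the product.

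\textbf{Transferring the other properties.} Both factors are countable because $M$ is countably infinite. Since $(M,\tau)$ is Fr\'echet--Urysohn and anti-Hausdorff, Lemma~\ref{lem:preservation} shows that $\tau_A$ and $\tau_B$ are Fr\'echet--Urysohn and anti-Hausdorff. Compactness and the $T_1$ property come from Lemma~\ref{lem:basic}. This gives every property listed in the statement.

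\textbf{Main obstacle.} Essentially all the work has already been done in Lemma~\ref{lem:kv}, which supplies (5.1). The only remaining point is to confirm that the hypotheses of Theorem~\ref{thm:criterion} are met, namely that singletons are closed (the $T_1$ property) and that $M$ has at least two points, so that $A$ and $B$ can be chosen disjoint. Nothing further is expected to be delicate.
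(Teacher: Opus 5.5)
Your proposal is correct and matches the paper's proof essentially step for step. You take $(M,\tau)$ from Lemma~\ref{lem:kv} and use two distinct points as the singleton closed sets $A$ and $B$, which both meet $M$, the only non-singleton member of $\KIrr(M,\tau)$; you then apply Theorem~\ref{thm:criterion} and get countability, the Fr\'echet--Urysohn property and anti-Hausdorffness from $M$ and Lemma~\ref{lem:preservation}.
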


\begin{proof}
Take $(M,\tau)$ from Lemma~\ref{lem:kv}, choose distinct points $a,b\in M$, and put $A=\{a\}$ and $B=\{b\}$. The only non-singleton member of $\KIrr(M,\tau)$ is $M$, which meets both $A$ and $B$. Theorem~\ref{thm:criterion} gives compact $T_1$ co-sober spaces
\[
 P=(M,\tau_A),~ Q=(M,\tau_B)
\]
such that $P\times Q$ is not co-sober. They are countable, and Lemma~\ref{lem:preservation} shows that they remain Fr\'echet--Urysohn and anti-Hausdorff.
\end{proof}

\section{A co-sober space whose square is not co-sober}\label{sec:square}

Let $P$ and $Q$ be disjoint nonempty spaces, and let $R$ be an infinite set disjoint from both. On
\[
 Z=P\sqcup Q\sqcup R
\]
define
\[
 \lambda=\{\varnothing\}\cup
 \left\{U\cup W\cup(R\setminus F):
 U\in\calO(P),\ W\in\calO(Q),\ F\in [R]^{<\omega}\right\},
 \tag{6.1}
\]
where $[R]^{<\omega}$ denotes the family of all finite subsets of $R$.
\begin{lemma}\label{lem:core}
The family $\lambda$ is a topology. If $P$ and $Q$ are $T_1$, then $(Z,\lambda)$ is $T_1$ and anti-Hausdorff, and $P$ and $Q$ are closed subspaces with their original topologies. Moreover:
\begin{enumerate}[\rm (1)]
\item A subset $K\subseteq Z$ is compact if and only if $K\cap P$ is compact in $P$ and $K\cap Q$ is compact in $Q$.
\item If $P$ and $Q$ are compact Fr\'echet--Urysohn spaces, then $Z$ is compact and Fr\'echet--Urysohn.
\item If $P$ and $Q$ are $T_1$ co-sober spaces, then $Z$ is co-sober.
\end{enumerate}
\end{lemma}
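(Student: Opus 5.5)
I will verify the topology axioms directly from (6.1), then read off the separation properties, and finally treat (1)--(3) in order. Write a nonempty member of $\lambda$ as $U\cup W\cup(R\setminus F)$. A finite intersection of such sets is again of this form: intersect the $P$-parts, the $Q$-parts, and take the union of the finitely many $F$'s. For an arbitrary union of nonempty members, the $P$-part and $Q$-part are unions of open sets. The $R$-part is $R\setminus\bigcap F_i$, and $\bigcap F_i$ is finite. Hence $\lambda$ is a topology.

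Every nonempty open set contains a co-finite part of the infinite set $R$. So any two nonempty open sets meet, which gives anti-Hausdorffness. Intersecting $\lambda$-open sets with $P$ returns exactly $\calO(P)$ (take $W=Q$, $F=\varnothing$), so $P$ carries its original topology, and similarly for $Q$. The set $Z\setminus P=Q\cup R$ is open, so $P$ is closed. For $T_1$, a point $z$ is closed because $Z\setminus\{z\}$ is open. If $z\in R$, take $U=P$, $W=Q$, $F=\{z\}$. If $z\in P$, take $U=P\setminus\{z\}$, which is open since $P$ is $T_1$.

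For (1), necessity holds because $P$ and $Q$ are closed, so $K\cap P$ is closed in $K$ and therefore compact. For sufficiency, let $\mathcal U$ be an open cover of $K$. Finitely many members cover $K\cap P$ and finitely many cover $K\cap Q$. If $K\cap R\ne\varnothing$, one member covering a point of $K\cap R$ contains $R\setminus F$, and finitely many more cover the remaining finite part of $K\cap R$. If $K\cap R=\varnothing$ there is nothing more to do.

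For (2), compactness is (1) with $K=Z$. For the Fr\'echet--Urysohn property, let $z\in\overline C$ and split into cases.
\begin{enumerate}[\rm (a)]
\item If $z\in R$ and $z\notin C$, then $C\cap R$ must be infinite. Otherwise $U=P$, $W=Q$, $F=(C\cap R)\cup\{z\}$ gives a neighbourhood of $z$ missing $C$. A sequence of distinct points of $C\cap R$ then converges to $z$, because every neighbourhood of $z$ is co-finite on $R$.
\item If $z\in P$, then $z\in\overline{C\cap P}$ or $z\in\overline{C\cap(Q\cup R)}$. In the first case the closure can be computed in $P$, since $P$ is closed and carries its own topology, and Fr\'echet--Urysohn in $P$ gives the sequence. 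In the second case, $z$ has neighbourhoods of the form $U\cup R\cup Q$ with $U$ an arbitrary neighbourhood of $z$ in $P$. These meet $Q\cup R$, so this case is harmless only if we use the first case. Concretely, the basic neighbourhoods $U\cup\varnothing\cup(R\setminus F)$ show that $z\in\overline{C\cap P}$ or that $C\cap R$ is infinite. In the latter case a sequence of distinct points of $C\cap R$ converges to $z$, since every neighbourhood of $z$ contains a co-finite part of $R$.
\item The case $z\in Q$ is symmetric to (b).
\end{enumerate}
This case analysis of the neighbourhood base is the main point needing care.

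For (3), let $K\in\KIrr(Z)$. Since $Z$ is $T_1$, it suffices to show $K$ is a singleton. By (1), removing a point $r\in K\cap R$ leaves a compact set. So if $K$ has a point of $R$ and another point $x$, then $K=\{x\}\cup(K\setminus\{x\})$ decomposes $K$ into proper compact pieces, provided both pieces are proper. Hence either $K\subseteq R$ is a singleton, or $K\subseteq P\cup Q$. In the latter case $K=(K\cap P)\cup(K\cap Q)$ is a union of compact sets, so $K\subseteq P$ or $K\subseteq Q$. Compact subsets of $K$ remain compact in $P$ (resp. $Q$), so $K$ is $k$-irreducible there, and co-sobriety of $P$ or $Q$ makes $K$ a singleton.
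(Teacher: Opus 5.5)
Your route is the paper's: the same verification of the axioms, the same cover argument for (1), the neighbourhoods $U\cup(R\setminus(C\cap R))$ with empty $Q$-part for (2), and a splitting of $K$ along $P$, $Q$, $R$ for (3). Your necessity argument in (1), via ``closed subset of a compact set,'' is a slightly slicker variant. However, two steps fail as written.

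In (2)(a), the set you propose is $P\cup Q\cup(R\setminus((C\cap R)\cup\{z\}))$. It does not contain $z$, so it is not a neighbourhood of $z$. It also meets $C$ whenever $C$ meets $P\cup Q$; you have reused the $T_1$ witness here. The right choice is $U=W=\varnothing$ and $F=C\cap R$. The set $R\setminus(C\cap R)$ contains $z$ because $z\notin C$, and it is disjoint from $C$. When $z\in C$, a constant sequence suffices. In (2)(b), drop the sentences about neighbourhoods $U\cup R\cup Q$. Your ``Concretely'' sentence, with $U$ chosen disjoint from $C\cap P$ and $F=C\cap R$, is the correct argument and is exactly the paper's.

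In (3) you write $K=\{x\}\cup(K\setminus\{x\})$ with $x$ the \emph{other} point. But (1) gives compactness of $K\setminus\{x\}$ only when $x\in R$. If $x\in P$, the trace $(K\cap P)\setminus\{x\}$ need not be compact in $P$. For example, take $P=[0,1]$, which is Hausdorff and hence $T_1$ and co-sober, with $K=[0,1]\cup\{r\}$ and $x=1/2$.

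The point to remove is the one in $R$: write $K=\{r\}\cup(K\setminus\{r\})$. The set $K\setminus\{r\}$ has the same traces on $P$ and $Q$ as $K$, so it is compact by (1). Both pieces are nonempty and proper as soon as $|K|\geq 2$. This is the decomposition the paper uses, after it first reduces to $K\subseteq R$ via $K=(K\cap P)\cup(K\setminus P)$. With this fix, your order of reductions goes through: first either $K\cap R=\varnothing$ or $K$ is a singleton, then $K\subseteq P$ or $K\subseteq Q$. The rest of your argument is correct.
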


\begin{proof}
Finite intersections and nonempty unions of co-finite subsets of the infinite set $R$ are co-finite. Together with the topology axioms on $P$ and $Q$, this proves that $\lambda$ is a topology.

Assume that $P$ and $Q$ are $T_1$. The complement of $p\in P$ is
\[
 (P\setminus\{p\})\cup Q\cup R,
\]
and is open; the same holds for points of $Q$. For $r\in R$, the complement $P\cup Q\cup(R\setminus\{r\})$ is open. Thus $Z$ is $T_1$. Every nonempty open set contains a co-finite subset of $R$, so any two nonempty open sets intersect. For $U\in\calO(P)$, the set $U\cup R$ is open in $Z$ and has trace $U$ on $P$, and similarly for $Q$. Finally, $Z\setminus P=Q\cup R$ and $Z\setminus Q=P\cup R$ are open, so both subspaces are closed.

To prove (1), suppose first that $K$ is compact. If $\mathcal U$ is an open cover of $K\cap P$ in $P$, then
\[
 \{U\cup R:U\in\mathcal U\}\cup\{Q\cup R\}
\]
is an open cover of $K$ in $Z$. A finite subcover shows that $K\cap P$ is compact. The argument for $K\cap Q$ is symmetric.

Conversely, assume that the two traces are compact and let $\mathcal V$ be an open cover of $K$. Finitely many members cover $K\cap P$, and finitely many cover $K\cap Q$. If some member has been chosen, their union contains all but finitely many points of $R$, and finitely many further members cover the remaining points of $K\cap R$. If both traces are empty and $K$ is nonempty, choose one member of $\mathcal V$ meeting $K\subseteq R$; it covers all but finitely many points of $K$, and finitely many additional members complete the cover. The empty set is compact. This proves (1), and compactness in (2) follows by taking $K=Z$.

For the Fr\'echet--Urysohn assertion, let $z\in\overline C^Z$. If $C\cap R$ is infinite, a sequence of distinct points of $C\cap R$ converges to every point of $Z$, because every nonempty neighborhood contains all but finitely many points of $R$. Suppose that $C\cap R$ is finite. If $z\in P$ and $z\notin\overline{C\cap P}^{P}$, choose an open neighborhood $U$ of $z$ in $P$ disjoint from $C\cap P$. Then
\[
 U\cup\bigl(R\setminus(C\cap R)\bigr),
\]
with empty $Q$-part, is an open neighborhood of $z$ in $Z$ disjoint from $C$, a contradiction. Hence $z\in\overline{C\cap P}^{P}$, and the Fr\'echet--Urysohn property of $P$ supplies a sequence in $C\cap P$ converging to $z$. The case $z\in Q$ is symmetric. If $z\in R$ and $z\notin C$, then $R\setminus(C\cap R)$, with empty $P$- and $Q$-parts, is an open neighborhood of $z$ disjoint from $C$. Thus $z\in C$, and the constant sequence suffices. This proves (2).

For (3), let $K$ be a $k$-irreducible compact subset of $Z$. Saturation is automatic because $Z$ is $T_1$. If $K$ meets both $P$ and $Z\setminus P$, then
\[
 K=(K\cap P)\cup(K\setminus P)
\]
is a union of two nonempty proper compact subsets by (1), a contradiction. Repeating the argument with $Q$ shows that $K$ is contained in one of $P$, $Q$ or $R$. If $K\subseteq P$, then (1) and the definition of k-irreducibility show that $K$ is $k$-irreducible in $P$, so it is a singleton. The same holds in $Q$. If $K\subseteq R$ and $|K|>1$, then $K=\{r\}\cup(K\setminus\{r\})$ is a union of two proper compact subsets by (1). Hence every $k$-irreducible compact subset of $Z$ is a singleton, and $Z$ is co-sober.
\end{proof}

\begin{theorem}\label{thm:square}
There exists a countable compact Fr\'echet--Urysohn $T_1$ anti-Hausdorff co-sober space whose square is not co-sober.
\end{theorem}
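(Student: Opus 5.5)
We take $P=(M,\tau_A)$ and $Q'=(M',\tau'_{B'})$ to be disjoint copies of the two spaces from Theorem~\ref{thm:binary}, with $A=\{a\}$ and $B=\{b\}$. We then form $Z=P\sqcup Q'\sqcup R$ with $R$ countably infinite and the topology $\lambda$ of (6.1). By Theorem~\ref{thm:binary} and Lemma~\ref{lem:core}, $Z$ is countable, $T_1$, anti-Hausdorff, compact, Fr\'echet--Urysohn and co-sober. It remains to show that $Z\times Z$ is not co-sober.

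The idea is that $P\times Q'$ sits inside $Z\times Z$ as a closed subspace, since $P$ and $Q'$ are closed in $Z$ and carry their original topologies by Lemma~\ref{lem:core}. Transporting $Q'$ back to $M$ along the identifying bijection, $P\times Q'$ is homeomorphic to $(M,\tau_A)\times(M,\tau_B)$. The proof of Theorem~\ref{thm:criterion} produces a non-singleton $k$-irreducible compact subset $\Delta(M)$ of this product; here $C=M$, and (4.1) together with Lemma~\ref{lem:diagonal} makes it homeomorphic to $(M,\tau)$. Let $D\subseteq P\times Q'\subseteq Z\times Z$ be its image. Then $D$ is compact in $Z\times Z$, since compactness is intrinsic to the subspace. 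It is also saturated, because $Z\times Z$ is $T_1$.

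The key step is that $D$ remains $k$-irreducible in $Z\times Z$. Suppose $D=K_1\cup K_2$ with $K_i$ compact subsets of $Z\times Z$. Each $K_i\subseteq D\subseteq P\times Q'$, and compactness of $K_i$ does not depend on the ambient space. Hence $K_1,K_2$ are compact (and saturated) in $P\times Q'$, and $k$-irreducibility of $D$ there forces $K_1=D$ or $K_2=D$. Since $D$ has more than one point and $Z\times Z$ is $T_1$, $D$ is not the saturation of a point. So $Z\times Z$ is not co-sober.

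I expect no serious obstacle. The only point needing care is the bookkeeping that the subspace topologies on $P$ and $Q'$ inside $Z$ are the original ones, so that the subspace $P\times Q'$ of $Z\times Z$ carries exactly the product topology. Once that holds, the intrinsic nature of compactness finishes the argument. Countability holds because $M$ and $R$ are countable.
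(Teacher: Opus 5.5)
Your proposal is correct and follows the paper's proof. You build the same space $Z$ from disjoint copies of $(M,\tau_A)$ and $(M,\tau_B)$ plus a countable co-finite core, and use the same witness, the diagonal copy of $(M,\tau)$ inside $P\times Q'\subseteq Z^2$. The only cosmetic difference is the last step: you take $k$-irreducibility in $P\times Q'$ from the proof of Theorem~\ref{thm:criterion} and lift it using intrinsic compactness, whereas the paper pulls the pieces $K_i$ back along the homeomorphism $h$ directly to $(M,\tau)$.
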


\begin{proof}
Let $(M,\tau)$ and the distinct points $a,b\in M$ be as in the proof of Theorem~\ref{thm:binary}. Put $A=\{a\}$ and $B=\{b\}$. To obtain disjoint copies of the two factors, set
\[
 P=M\times\{0\},\quad Q=M\times\{1\},
\]
and transport the topologies $\tau_A$ and $\tau_B$ to $P$ and $Q$, respectively. Take $R$ countably infinite and disjoint from $P\sqcup Q$, and define $Z$ by (6.1). Lemma~\ref{lem:core} shows that $Z$ has all the asserted properties.

Define
\[
 \Gamma=\{((x,0),(x,1)):x\in M\}\subseteq P\times Q\subseteq Z^2.
\]
By (4.1) and Lemma~\ref{lem:diagonal}, the map
\[
 h:(M,\tau)\longrightarrow\Gamma,\quad
 h(x)=((x,0),(x,1)),
\]
is a homeomorphism. Since $M\in\KIrr(M,\tau)$ by Lemma~\ref{lem:kv}, the set $\Gamma$ is compact. If
\[
 \Gamma=K_1\cup K_2
\]
with $K_1,K_2$ compact saturated subsets of $Z^2$, then $K_i\subseteq\Gamma$. Thus $h^{-1}(K_i)$ is compact in $(M,\tau)$; it is also saturated because $(M,\tau)$ is $T_1$. The $k$-irreducibility of $M$ gives $h^{-1}(K_i)=M$ for some $i$, and hence $K_i=\Gamma$. Therefore $\Gamma$ is $k$-irreducible in $Z^2$. Since $Z^2$ is $T_1$ and $\Gamma$ has more than one point, it is not the saturation of a point. Consequently, $Z^2$ is not co-sober.
\end{proof}

\begin{remark}\label{rem:powers}
The same witness placed in any two coordinates shows that $Z^I$ is not co-sober whenever $|I|\geq2$; the other coordinates may be fixed at one point of $Z$. This is an immediate extension of Theorem~\ref{thm:square}, not an additional construction. The added core does not preserve the US property: every injective sequence in $R$ converges to every point of $Z$. No assertion is made that $Z^2$ is Fr\'echet--Urysohn.
\end{remark}

The square counterexample also has a categorical consequence. Let $\mathbf{Top}_0$ denote the category of $T_0$-spaces and continuous maps, and let $\mathbf{CoSob}$ be its full subcategory of co-sober spaces. A $\mathbf{CoSob}$-reflection of a space $X$ is a continuous map $\eta:X\to C$, with $C$ co-sober, through which every continuous map from $X$ to a co-sober space factors uniquely. The subcategory is reflective if every $T_0$-space has such a reflection.

\begin{corollary}[Shen--Xi--Zhao {\cite[Corollary~4.3]{ShenXiZhao2024}}]\label{cor:nonreflective}
The full subcategory $\mathbf{CoSob}$ is not reflective in $\mathbf{Top}_0$.
\end{corollary}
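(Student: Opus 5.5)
We argue by contradiction: assume $\mathbf{CoSob}$ is reflective in $\mathbf{Top}_0$, and derive that $\mathbf{CoSob}$ is closed under products in $\mathbf{Top}_0$. This contradicts Theorem~\ref{thm:square}, since the space $Z$ constructed there is co-sober and $T_0$ while $Z^2$ is not co-sober. Note that $Z^2$ is a $T_0$-space ($T_1$ even), so it is an object of $\mathbf{Top}_0$, and it is the categorical product of $Z$ with itself in $\mathbf{Top}_0$. We also note that $\mathbf{CoSob}$ is closed under homeomorphisms: a homeomorphism preserves specialization order, compact saturated sets and unions, hence $k$-irreducible sets and principal filters $\Up x$.

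The main step is the standard argument that a full reflective subcategory closed under isomorphisms is closed under all limits existing in the ambient category, specialized to binary products. Let $\eta:Z^2\to C$ be the reflection, with $C$ co-sober. The projections $\pi_1,\pi_2:Z^2\to Z$ are continuous into the co-sober space $Z$, so they factor as $\pi_i=g_i\circ\eta$ for continuous $g_i:C\to Z$. Let $g=(g_1,g_2):C\to Z^2$. Then $\pi_i\circ g\circ\eta=\pi_i$ for both $i$, so $g\circ\eta=\mathrm{id}_{Z^2}$.

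It remains to show $\eta\circ g=\mathrm{id}_C$. Both $\eta\circ g\circ\eta$ and $\mathrm{id}_C\circ\eta$ equal $\eta$, since $\eta\circ g\circ\eta=\eta\circ\mathrm{id}_{Z^2}$. By the uniqueness part of the universal property applied to the map $\eta:Z^2\to C$ into the co-sober space $C$, we get $\eta\circ g=\mathrm{id}_C$. Hence $\eta$ is a homeomorphism $Z^2\cong C$, so $Z^2$ is co-sober, which is the desired contradiction.

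The only point needing care is the uniqueness step, which uses the factorization property with target $C$ itself. This is legitimate because $C$ is co-sober. The rest is formal, so I expect no real obstacle beyond recording that homeomorphisms preserve co-sobriety.
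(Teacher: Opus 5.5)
Your proposal is correct and matches the paper's proof: you factor the projections through the putative reflection $\eta:Z^2\to C$ and pair them into a left inverse of $\eta$. Uniqueness of factorization through $\eta$ then makes that map a two-sided inverse, so $Z^2\cong C$ is co-sober, contradicting Theorem~\ref{thm:square}. Your remarks that $Z^2$ is a $T_0$-space and that co-sobriety is invariant under homeomorphisms only state explicitly what the paper uses tacitly.
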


\begin{proof}
Let $Z$ be the co-sober space in Theorem~\ref{thm:square}. We show directly that $Z^2$ has no $\mathbf{CoSob}$-reflection. Suppose that $\eta:Z^2\to C$ were such a reflection. Since $Z$ is co-sober, the coordinate projections $\pi_i:Z^2\to Z$ factor through $\eta$: there are continuous maps $p_i:C\to Z$ satisfying $p_i\circ\eta=\pi_i$ for $i=1,2$. The continuous map
\[
 r:C\longrightarrow Z^2,\qquad r(c)=(p_1(c),p_2(c)),
\]
satisfies $r\circ\eta=\operatorname{id}_{Z^2}$. Therefore
\[
 (\eta\circ r)\circ\eta=\eta=\operatorname{id}_C\circ\eta.
\]
By the uniqueness in the reflection property, applied to the map $\eta:Z^2\to C$, we have $\eta\circ r=\operatorname{id}_C$. Thus $\eta$ is a homeomorphism, so $Z^2$ is co-sober because $C$ is co-sober. This contradicts Theorem~\ref{thm:square}. Hence $Z^2$ has no $\mathbf{CoSob}$-reflection, and $\mathbf{CoSob}$ is not reflective.
\end{proof}

Shen, Xi and Zhao obtained this conclusion from their general necessary condition that a full reflective subcategory of $\mathbf{Top}_0$, closed under homeomorphisms and containing a non-$T_1$ space, must contain every sober space \cite[Theorem~3.6(3)]{ShenXiZhao2024}. The proof above instead uses the explicit failure of closure under binary products, already witnessed by the square in Theorem~\ref{thm:square}, without invoking that general characterization.

\section{Products with a KC factor}\label{sec:kc}

For comparison, we record a positive observation.

\begin{proposition}\label{prop:kc}
Let $X$ and $Y$ be co-sober spaces. If $X$ is a KC-space, then $X\times Y$ is co-sober.
\end{proposition}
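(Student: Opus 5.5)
The plan is to take $K\in\KIrr(X\times Y)$ and show two things in turn: first that its projection to $X$ is a single point, and then that its projection to $Y$ is the saturation of a single point. Two background facts will be used throughout. A KC-space is $T_1$, because singletons are compact and hence closed. The specialization order on $X\times Y$ is the coordinatewise one, so $\Up(x,y)=\{x\}\times\Up y$ and $X\times Y$ is $T_0$.

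\emph{Step 1: the first projection is $k$-irreducible.} Put $C=\pi_X(K)$. This set is compact, so it is closed in $X$ by the KC property, and it is saturated because $X$ is $T_1$. Suppose that $C=C_1\cup C_2$ with $C_1,C_2\in Q(X)$. Each $C_i$ is compact, hence closed, so $C_i\times Y$ is closed in $X\times Y$. Consequently $K_i=K\cap(C_i\times Y)$ is compact, being a closed subset of a compact set. It is also saturated, since $C_i\times Y$ is an upper set ($C_i$ is trivially saturated in the $T_1$-space $X$). Moreover $K=K_1\cup K_2$. If one $K_i$ is empty, the other equals $K$. Otherwise both are members of $Q(X\times Y)$, and $k$-irreducibility of $K$ gives $K=K_i$ for some $i$. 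In either case $C=\pi_X(K)\subseteq C_i$, so $C=C_i$. Thus $C\in\KIrr(X)$, and co-sobriety of the $T_1$-space $X$ gives $C=\{x\}$ for some $x\in X$.

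\emph{Step 2: reduce to $Y$.} We now have $K=\{x\}\times L$ with $L=\pi_Y(K)$. The map $y\mapsto(x,y)$ is a homeomorphism of $Y$ onto the subspace $\{x\}\times Y$, so $L$ is compact. Saturation of $K$ and the formula $\Up(x,y)=\{x\}\times\Up y$ show that $L$ is saturated in $Y$. Now let $L=L_1\cup L_2$ with $L_i\in Q(Y)$. Then each $\{x\}\times L_i$ is compact, and it is saturated because $\Up(x,y)=\{x\}\times\Up y$ for each of its points. These two sets cover $K$, so one of them equals $K$, that is, $L=L_i$. Hence $L\in\KIrr(Y)$, and co-sobriety of $Y$ gives $L=\Up y$ for some $y\in Y$. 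Therefore $K=\{x\}\times\Up y=\Up(x,y)$, and $X\times Y$ is co-sober.

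The only delicate point is Step 1. There we need the pieces $K\cap(C_i\times Y)$ to be compact, and this is exactly where the KC hypothesis enters: it makes the compact sets $C_i$ closed. Without KC, the diagonal examples of Theorem~\ref{thm:criterion} show that this step fails. I also need to make sure that possibly empty pieces are handled correctly, which the case distinction in Step 1 does.
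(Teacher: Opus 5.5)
Your proof is correct and follows essentially the same route as the paper. You project to $X$, use the KC property to make the pieces $K\cap(C_i\times Y)$ closed in $K$ and hence compact, conclude that $\pi_X(K)$ is a singleton, and then reduce to co-sobriety of $Y$ on the fibre $\{x\}\times Y$. Your case distinction for empty pieces is harmless but unnecessary: each $C_i$ is a nonempty subset of $\pi_X(K)$, so $K_i\neq\varnothing$ automatically, which is how the paper handles it.
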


\begin{proof}
A KC-space is $T_1$, since every singleton is compact and hence closed. Let $K\in\KIrr(X\times Y)$ and put $A=\pi_X(K)$. The set $A$ is compact. It is also saturated: if $x\in A$ and $x\leq_X x'$, choose $y$ with $(x,y)\in K$. The specialization order of a product is coordinatewise, so $(x',y)\in K$ and $x'\in A$.

Suppose that
\[
 A=A_1\cup A_2,~ A_1,A_2\in Q(X).
\]
Each $A_i$ is closed because $X$ is KC. Hence
\[
 K_i=K\cap(A_i\times Y)
\]
is a closed subset of the compact set $K$, and is therefore compact. It is nonempty because $A_i\subseteq\pi_X(K)$, and it is saturated because both $K$ and $A_i\times Y$ are saturated. Moreover, $K=K_1\cup K_2$. The k-irreducibility of $K$ gives $K=K_i$ for some $i$, and projection gives $A=A_i$. Thus $A\in\KIrr(X)$.

By co-sobriety of $X$, $A=\Up x$ for some $x\in X$. Since $X$ is $T_1$, $A=\{x\}$. Hence
\[
 K=\{x\}\times L,
~ L=\pi_Y(K).
\]
The set $L$ is compact. It is also saturated: if $y\in L$ and $y\leq_Y y'$, choose $x$ with $(x,y)\in K$. Since the specialization order on $X\times Y$ is coordinatewise and $K$ is saturated, $(x,y')\in K$, so $y'\in L$. The set $L$ is $k$-irreducible, because any decomposition of $L$ into compact saturated subsets would give a corresponding decomposition of $K$ after taking the product with $\{x\}$. Since $Y$ is co-sober, $L=\Up y$ for some $y\in Y$. Consequently,
\[
 K=\{x\}\times\Up y=\Up(x,y).
\]
Thus $X\times Y$ is co-sober.
\end{proof}

By Lemma~\ref{lem-Frechet-US-KC}, Proposition~\ref{prop:kc}, and the standard implication ``first-countable $\Rightarrow$ Fr\'echet--Urysohn'', we obtain the following two corollaries.

\begin{corollary}\label{cor:frechet}
Let $X$ and $Y$ be co-sober spaces. If $X$ is a Fr\'echet--Urysohn US-space, then $X\times Y$ is co-sober.
\end{corollary}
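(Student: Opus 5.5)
The plan is to reduce the statement to Proposition~\ref{prop:kc}, which needs only that one factor is a KC-space. So it suffices to show that a Fr\'echet--Urysohn US-space is KC. This is exactly Lemma~\ref{lem-Frechet-US-KC} of K\"unzi and van der Zypen.

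Concretely, let $X$ and $Y$ be co-sober, with $X$ Fr\'echet--Urysohn and US. By Lemma~\ref{lem-Frechet-US-KC}, every compact subset of $X$ is closed, so $X$ is a KC-space. The hypotheses of Proposition~\ref{prop:kc} now hold verbatim: both factors are co-sober and $X$ is KC. Hence $X\times Y$ is co-sober.

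There is no real obstacle, because the only nontrivial input is the cited lemma. One point needs checking: co-sobriety is defined here for $T_0$-spaces, so $X$ must be $T_0$. This holds automatically, since US-spaces are $T_1$, as recalled in Section~\ref{sec:prelim}. The proof of Proposition~\ref{prop:kc} also uses that KC implies $T_1$, so no separation axiom is missing on the $X$ side. On the $Y$ side, only co-sobriety is used, and it is assumed.

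For the symmetric version, the same argument applies with the roles exchanged via the homeomorphism $X\times Y\cong Y\times X$. Co-sobriety is a topological invariant, so it transfers across this homeomorphism. The companion first-countable corollary then follows in the same way: every first-countable space is Fr\'echet--Urysohn, so a first-countable US factor is covered by the argument above.
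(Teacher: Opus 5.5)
Your proposal is correct and follows the paper's route exactly: Lemma~\ref{lem-Frechet-US-KC} makes $X$ a KC-space, and Proposition~\ref{prop:kc} then yields co-sobriety of $X\times Y$. Your check that $X$ is $T_0$, because US-spaces are $T_1$, is a valid minor addition.
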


\begin{corollary}\label{cor:c1}
Let $X$ and $Y$ be co-sober spaces. If $X$ is a first-countable US-space, then $X\times Y$ is co-sober.
\end{corollary}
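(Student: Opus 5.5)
The last statement is Corollary~\ref{cor:c1}. I would deduce it from Corollary~\ref{cor:frechet} by showing that the hypothesis ``first-countable US-space'' implies ``Fr\'echet--Urysohn US-space''. The co-sobriety assumptions on $X$ and $Y$ are passed along unchanged, so the argument concerns only the topology of $X$.

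Step one: recall the standard fact quoted in Section~\ref{sec:prelim}, that every first-countable space is Fr\'echet--Urysohn \cite[Theorem~1.6.14]{Engelking}. For completeness I would sketch it. Given $x\in\overline{A}$, fix a decreasing countable neighbourhood base $V_1\supseteq V_2\supseteq\cdots$ at $x$, obtained by taking finite intersections of a countable base. Pick $x_n\in V_n\cap A$. Every neighbourhood of $x$ contains some $V_N$, hence contains $x_n$ for all $n\geq N$, so $x_n\to x$.

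Step two: the US property is a hypothesis and needs no change. So $X$ is a Fr\'echet--Urysohn US-space, and Corollary~\ref{cor:frechet} yields that $X\times Y$ is co-sober. Equivalently, one can chain Lemma~\ref{lem-Frechet-US-KC} to get that $X$ is KC, then apply Proposition~\ref{prop:kc}. One could also use the fact cited in the preliminaries that a first-countable US-space is $T_2$, hence KC, and go directly to Proposition~\ref{prop:kc}.

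There is no real obstacle. The only point needing care is the decreasing choice of the neighbourhood base in step one, and that is routine.
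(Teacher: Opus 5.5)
Your proposal is correct and follows the paper's route: the paper obtains Corollary~\ref{cor:c1} by combining ``first-countable $\Rightarrow$ Fr\'echet--Urysohn'' with Lemma~\ref{lem-Frechet-US-KC} and Proposition~\ref{prop:kc}, which is the same as going through Corollary~\ref{cor:frechet}, and your decreasing-local-base sketch of the first implication is sound. Your alternative route, ``first-countable US $\Rightarrow T_2 \Rightarrow$ KC'' followed by Proposition~\ref{prop:kc}, is also valid.
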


\section{Conclusion}

We have shown that co-sobriety is not preserved by binary products. The two factors can be chosen countable, compact, Fr\'echet--Urysohn, $T_1$ and anti-Hausdorff. A related construction gives a single space with the same properties whose square is not co-sober. The general mechanism is the classification in Theorem~\ref{thm:classification} together with the diagonal argument in Theorem~\ref{thm:criterion}. The specialized external input is confined to Lemma~\ref{lem:kv}. The square counterexample also gives a direct product-based proof of the known non-reflectivity theorem of Shen, Xi and Zhao; in fact, the space $Z^2$ in Theorem~\ref{thm:square} admits no reflection into co-sober spaces.



\section*{Competing interests}
The authors have no relevant financial or non-financial interests to disclose.

\section*{Data availability}
No datasets were generated or analyzed during the current study.


\end{document}